\documentclass [a4paper, 10pt] {article}
\usepackage[cp1251]{inputenc}
\usepackage{amsmath,amssymb,longtable}
\usepackage[english]{babel}
\usepackage[final]{graphicx}
\usepackage{latexsym}
\usepackage{amsthm}
\usepackage{xcolor}

\usepackage{epstopdf}
\DeclareGraphicsExtensions{.pdf,.eps,.png,.jpg,.mps}

\newcommand{\B}{\mathbb{B}}
\newcommand{\Z}{\mathbb{Z}}
\newcommand{\R}{\mathbb{R}}

\newcommand{\PP}{\mathbb{P}}
\newcommand{\rem}{\vartriangleright}
\newcommand{\ple}{\preceq}

\newtheorem{cor}{Corollary}
\newtheorem{observation}{Observation}
\newtheorem{lemma}{Lemma}
\newtheorem{defi}{Definition}

\global\emergencystretch = 80 pt

\newcommand{\highlight}[1]{#1}
\newcommand{\highlightt}[1]{#1}
\newcommand{\highlighttt}[1]{#1}

\righthyphenmin=2
\setcounter{table}{0}

\begin{document}
\begin{center}{\Large \bf
Minimal proper non-IRUP instances of the one-dimensional Cutting Stock Problem
}
 
\bigskip

Vadim M. Kartak$^{1)}$, Artem V. Ripatti$^{2)}$,

Guntram Scheithauer$^{3)}$ and Sascha Kurz$^{4)}$

\medskip

{\small
$^{1)}$Bashkir State Pedagogical University, Russia\\
$^{2)}$Ufa State Aviation Technical University, Russia\\
$^{3)}$Technische Universit\"at Dresden, Germany\\
$^{4)}$Universit\"at Bayreuth, Germany
}

\today
\end{center}

\textbf{Abstract}: 
We consider the well-known one dimensional cutting stock problem (1CSP). Based on the pattern structure of the classical
ILP formulation of Gilmore and Gomory, we can decompose the infinite set of 1CSP instances, with a fixed demand $n$, into 
a finite number of equivalence classes. \highlight{We show up a strong relation to weighted simple games.} Studying the integer 
round-up property we computationally show that all 1CSP instances with \highlight{$n\le 9$} are proper IRUP, while we give 
\highlightt{examples of a proper non-IRUP instances} with \highlight{$n=10$}. \highlight{A gap larger than $1$ occurs for $n=11$.} The worst known 
gap is \highlight{raised} from $1.003$ to $1.0625$. The used algorithmic approaches are based on exhaustive enumeration and integer linear 
programming. Additionally we give some theoretical bounds showing that all 1CSP instances with some specific parameters have the proper IRUP.

\medskip

\textbf{Keywords}: bin packing problem, cutting stock problem, integer round-up property, equivalence of instances, branch and bound method, linear programming, weighted simple games.

\medskip

\section{Introduction}

Baum and Trotter \cite{baum1981integer} have introduced the notation of having the integer round-up property (IRUP) for integer 
linear minimization problems (ILPs), stating that \highlight{rounding up} the optimal value \highlight{of} its LP relaxation yields an 
upper bound. Here we study the one-dimensional cutting stock problem (1CSP) with respect to the IRUP. The classical ILP formulation for 
the cutting stock \highlight{problem} by Gilmore and Gomory is based on so-called cutting patterns \cite{gilmore1961linear}. Using this 
formulation, Marcotte \cite{marcotte1985cutting} has shown that certain subclasses of 
cutting stock problems have the IRUP, while she later showed that there are instances of 1CSP having a gap of exactly 
$1$ \cite{marcotte1986instance}. The first example with gap larger than $1$ was given in \cite{fieldhouse1990duality}. 
Subsequently, the gap was increased to $\frac{6}{5}$ \cite{rietz2008large,scheithauer1992gap,scheithauer1997theoretical}, i.e., no example 
with a gap of at least $2$ is currently known. Indeed, the authors of \cite{scheithauer1995modified} have conjectured that the gap is always 
below $2$ -- called the MIRUP property, which is one of the most important theoretical issues about 1CSP, see also \cite{eisenbrand2013bin}. 
Practical experience shows  that the typical gap is rather small \cite{scheithauer1995branch}. An algorithm for verifying when an 1CSP 
instance does not have IRUP is presented in \cite{kartak2004sufficient}.

Dropping some cutting patterns from the ILP formulation of \cite{gilmore1961linear}, the authors of \cite{nitsche1999tighter} \highlight{introduced}
the \textit{proper relaxation}, which is at most as worse as the LP relaxation. We call the difference of the optimal value and the optimal 
value of the corresponding proper relaxation the proper gap. If the proper gap is at least $1$, we speak of a proper non-IRUP instance. The
\highlight{currently} largest known proper gap of 1CSP is given by $1.003$ \cite{friendly_bin_packing,phd_diaz}.

The paper is organized as follows. In Section~\ref{sec_notation} we introduce the basic notation. The concept of partitioning 
the infinite set of 1CSP with fixed demand into a finite set of equivalence classes is described in Section~\ref{sec_equivalence}. 
The relation to the discrete structure of weighted simple games, from cooperative game theory, is the topic of Section~\ref{sec_simple_games}. 
In Section~\ref{sec_enumeration} we develop an exhaustive enumeration algorithm for the generation of all equivalence classes of 
1CSP instances. We proceed with some theoretical bounds on the proper gap of 1CSP instances in Section~\ref{sec_bounds}. \highlight{Based} on the
integer linear programming approaches in Section~\ref{sec_ILP} we present computational results in Section~\ref{sec_results}. We draw 
a conclusion in Section~\ref{sec_conclusion}.

\section{Basic notation}
\label{sec_notation}
Assume that one-dimensional material objects like, e.g., paper reels or wooden rods of a given length $L\in\mathbb{R}_{>0}$ are
cut into smaller pieces of lengths $l_1,\dots,l_m\in\mathbb{R}_{>0}$ in order to fulfill the order demands $b_1,\dots,b_{\highlightt{m}} \in \mathbb{Z}_{>0}$. 
The question for the minimum needed total amount of stock material or, equivalently, the minimization of waste, is the famous 1CSP. Using 
the abbreviations $l=(l_1,\dots,l_m)^T$ and $b=(b_1,\dots,b_m)^T$ we denote an instance of 1CSP by $E=(m,L,l,b)$. W.l.o.g.\ we assume 
$0<l_1\le\dots\le l_m\le L$ in the following.

The cutting patterns, mentioned in the introduction, are formalized as vectors $a=(a_1,\dots,a_m)^{\top}\in\mathbb{Z}_{\ge 0}^m$. 
We call a \textit{pattern} (of $E$) \textit{feasible} if $l^{\top}a\le L$. By $P^f(E):=\left\{a\,:\,l^{\top}a\le L,\,a\in\mathbb{Z}_{\ge 0}^m\right\}$ 
we  denote the set of all feasible patterns. Additionally, we call a pattern \textit{proper}, if it is feasible and $a_i\le b_i$ for all $1\le i\le m$. 
By $P^p(E):=\left\{a\,:\,l^{\top}a\le L,\,a\in\mathbb{Z}_{\ge 0}^m,\, a_i\le b_{\highlight{i}},\, 1\le i\le m\right\}$ we denote the set of all proper patterns.

Given a set of patterns $P=\{a^1,\dots,a^r\}$ (of $E$), let $A(P)$ denote the concatenation of the pattern vectors $a^i$. With this 
we can define
$$
  z_D(P,E) := \sum_{i=1}^{\highlightt{r}} x_i \to \min \; \text{ subject to } \; A(P)x = b, \; x \in \Z^{\highlightt{r}}_{\ge 0}\quad \text{and}
$$
$$
  z_C(P,E) := \sum_{i=1}^{\highlightt{r}} x_i \to \min \; \text{ subject to } \; A(P)x = b, \; x \in \R^{\highlightt{r}}_{\ge 0}.
$$
Choosing $P=P^f(E)$ we obtain the mentioned ILP formulation for 1CSP of Gilmore and Gomory and its continuous relaxation. 
As abbreviations we use $z_D^f(E):=z_D(P^f(E),E)$, $z_C^f(E):=z_C(P^f(E),E)$, and $\Delta(E):=z_D^f(E)-z_C^f(E)$, where the
later is called the \textit{gap of instance $E$}. So an instance $E$ has \textit{IRUP} if $\Delta(E)<1$ and is called 
\textit{non-IRUP instance} otherwise. \textit{MIRUP instances} are those with $\Delta(E)<2$.

Choosing $P=P^p(E)$ we obtain the proper relaxation with optimal value $z_C^p(E):=z_C(P^p(E),E)$. Since 
$z_D(P^p(E),E)=z_D(P^f(E),E)$, we call $\Delta_p(E):=z_D^f(E)-z_C^p(E)$ the \textit{proper gap of instance $E$}. Similarly, an 
instance $E$ is a \textit{proper IRUP instance} if $\Delta_p(E)<1$ and a \textit{\highlight{proper} non-IRUP instance} otherwise.

Due to $\Delta_p(E) \le \Delta(E)$ proper non-IRUP instances are also non-IRUP instances. The 
converse is not true as shown by $E = \left(3, 30, (2,3,5)^{\top}, (1,2,4)^{\top}\right)$ with $\Delta(E) = 31/30$ and
$\Delta_p(E) = 4/5$.

\section{Equivalence of 1CSP instances}
\label{sec_equivalence}

Given an 1CSP instance $E=(m,L,l,b)$ with a demand of $n=\sum_{i=1}^m b_i$, we can easily transform it into
an instance $\bar{E}=(n,L,l',b')$ with $b_i'=1$ for all $1\le i\le n$ by taking $b_j$ copies of length $l_j$ for each 
$1\le j\le m$. We can easily check that this has no effect on the stated (I)LPs, i.e., we have $z_D^f(E)=z_D^f(\bar{E})$, 
$z_C^f(E)=z_C^f(\bar{E})$, and $z_C^p(E)=z_C^p(\bar{E})$. Thus we assume $b_i=1$ and abbreviate 1CSP instances by $E=(n,L,l)$ in 
the following. Especially we have $P^p(E)\in\mathbb{B}^n$, where $\mathbb{B}=\{0,1\}$.

We remark that, using this modification, the 1CSP becomes equivalent to the Bin Packing Problem (BPP), where $n$ 
items of size $l_i$ have to be packed into as few as possible identical bins of capacity $L$. So our results also hold for the 
BPP and indeed some part of the related literature considers the BPP instead of the 1CSP. 
\highlightt{The continuous  relaxation of BPP is also known as the Fractional Bin Packing Problem, cf. \cite{friendly_bin_packing}.}

Since the set partitioning formulation of Gilmore and Gomory and its proper relaxation actually do not depend directly 
on the parameters $L$ and $l_i$, we partition the set of 1CSP instances for a fixed demand $n$ according to their set of proper 
patterns.

\begin{defi}
  \label{def_pattern_equivalence}
  1CSP instances $E$ and $\bar{E}$ are \textit{pattern-equivalent} if $P^p(E) = P^p(\bar{E})$.
\end{defi}

Since $z_C^p(E)=z_C^p(\bar{E})$, $z_D^f(E)=z_D^f(\bar{E})$, and $\Delta_p(E)=\Delta_p(\bar{E})$ 
for pattern-equivalent instances $E$ and $\bar{E}$, we can restrict ourselves onto the set of equivalence classes
$$\mathbb{P}_n^p:=\left\{P^p(E)\,:\,E=(n,L,l),\,L\in\mathbb{R}_{>0},\,l\in\mathbb{R}_{>0}^n\right\}.
\footnote{We remark that the authors of \cite{rietz2002families} have introduced a finer equivalence relation, called full 
pattern-equivalence \highlight{and based on $P^f(E) = P^f(\bar{E})$}, which is needed if also $z_C^f(E)$ and $\Delta(E)$ 
should be preserved. \highlight{For example, the instances $E = (6, 30, (6, 6, 10, 10, 11, 15)^{\top})$
and $\bar{E} = (6, 10000, (2000, 2000, 3001, 3250, 3750, 5000)^{\top})$ are proper pattern-equivalent but not full 
pattern-equivalent, because $P^f(\bar{E})$ contains pattern $(0,0,2,0,1,0)$ but $P^f(E)$ does not.}}$$

Obviously, $\left|\mathbb{P}_n^p\right|\le 2^{\left|\mathbb{B}^n\right|}=2^{2^n}$ is finite, but not all 
subsets of $\mathbb{B}^n$ can be attained as proper patterns of an 1CSP instance.

\begin{lemma}
  \label{lemma_ineq}
  Given $P\subseteq\mathbb{B}^n$\highlightt{,}
 an 1CSP instance $E=(n,L,l)$ with $P^p(E)=P$ exists iff the following 
  system of linear inequalities contains a solution:
  \begin{eqnarray}
    1 \le l_1 \le  \dots \le l_n \le L, \label{lp_ineqs}\\
    \sum_{i=1}^{\highlightt{n}} l_i a_i  \le L  && \forall a \in P,\nonumber\\ 
    \sum_{i=1}^{\highlightt{n}} l_i a_i  \ge L+1 && \forall a \in \B^n \setminus P,\nonumber\\
    l_1, l_2, \dots l_n, L \in\R_{\ge 0}.\nonumber
  \end{eqnarray}
\end{lemma}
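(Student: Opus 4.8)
The plan is to prove both directions of the "iff" separately, where the forward direction is essentially a matter of reading off the defining conditions, and the backward direction is a reconstruction from a feasible point of the linear system.

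First I would handle the forward direction ($\Rightarrow$). Suppose an instance $E=(n,L,l)$ with $P^p(E)=P$ exists. Since we have normalized to $b_i=1$, the proper patterns are exactly the $0/1$-vectors, so $P^p(E)=\{a\in\B^n : l^{\top}a\le L\}$, and by assumption this set equals $P$. The w.l.o.g.\ ordering assumption $0<l_1\le\dots\le l_n\le L$ gives the chain in~\eqref{lp_ineqs} after a harmless scaling (see below). For every $a\in P$ we have $a\in P^p(E)$, hence $\sum_i l_i a_i\le L$, which is the second block of inequalities. For every $a\in\B^n\setminus P$ we have $a\notin P^p(E)$, so $\sum_i l_i a_i> L$; the one subtlety is upgrading the strict inequality $>L$ to the nonstrict $\ge L+1$ demanded by the statement. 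This is where a normalization argument is needed: I would scale $(l,L)$ so that all entries become integers (or at least so that the finitely many positive gaps $\sum_i l_i a_i - L$, over $a\notin P$, are each at least $1$), which is possible because there are only finitely many constraints and the gaps are bounded away from $0$. The same scaling lets me enforce $l_1\ge 1$ in~\eqref{lp_ineqs}. So the produced $(l,L)$ solves the whole system.

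Next I would do the backward direction ($\Leftarrow$). Given any solution $(l_1,\dots,l_n,L)$ of the system, define the instance $E=(n,L,l)$. The ordering and positivity constraints in~\eqref{lp_ineqs} guarantee this is a legitimate 1CSP instance in the assumed normal form. It remains to check $P^p(E)=P$ as sets of $0/1$-vectors. For $a\in P$, the second block gives $l^{\top}a\le L$, so $a$ is a feasible (hence proper, since $a\in\B^n$ and $b_i=1$) pattern, i.e.\ $a\in P^p(E)$. Conversely, for $a\in\B^n\setminus P$, the third block gives $l^{\top}a\ge L+1>L$, so $a$ is infeasible and $a\notin P^p(E)$. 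Since every $a\in\B^n$ lies in exactly one of $P$ or $\B^n\setminus P$, these two inclusions yield $P^p(E)=P$.

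The main obstacle I anticipate is the passage between the strict separation that the geometry naturally provides (feasible patterns satisfy $\le L$, infeasible ones satisfy $>L$) and the $+1$ slack written into the lemma. The cleanest way to deal with it is to observe that the map $E\mapsto P^p(E)$ is scale-invariant: replacing $(l,L)$ by $(\lambda l,\lambda L)$ for $\lambda>0$ changes neither feasibility comparisons nor the resulting pattern set. Hence whenever a separating $(l,L)$ exists at all, one may rescale by a large $\lambda$ so that every strict gap exceeds $1$ and $l_1\ge 1$, converting strict inequalities into the stated integer-gap form without altering $P^p(E)$. I would state this rescaling once and invoke it in the forward direction; the backward direction needs no such care because $\ge L+1$ already implies strict infeasibility.
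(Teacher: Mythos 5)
Your proposal is correct and follows essentially the same route as the paper: the backward direction is immediate from the definitions, and the forward direction uses exactly the paper's rescaling argument (multiply $(l,L)$ by a large positive factor so that $l_1\ge 1$ and each of the finitely many strict gaps $l^{\top}a-L>0$ for $a\notin P$ becomes at least $1$, which leaves $P^p(E)$ unchanged). Your write-up is merely more explicit than the paper's about why the strict inequality $>L$ can be upgraded to $\ge L+1$.
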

\begin{proof}
  Let $E=(n,L,l)$ be an 1CSP instance with $P = P^b(E)$. Due to definition we have $0<l_1\le\dots\le l_n\le L$.
  By multiplying the $l_i$ and $L$ with a suitable positive factor, we can ensure $l_1\ge 1$. Similarly, we 
  have $\sum_{i=1}^{\highlightt{n}} l_i a_i\highlight{\le L}$ for all $a\in P$ and $\sum_{i=1}^{\highlightt{n}} l_i a_i  > L$ for all $a \in \B^n \setminus P$
  by definition, so that multiplying the variables with a suitable positive factor ensures the validy of all
  constraints.

  If $L,l$ satisfy the inequalities~(\ref{lp_ineqs}), then $E=(n,L,l)$ is an example of the demanded instance.
\end{proof}

We remark that we can additionally require that $L$ and the $l_i$ are positive integers and indeed we will 
use only integers in our subsequent examples of 1CSP instances.

The parameters $l_i$ and $L$ of Lemma~\ref{lemma_ineq} have the following nice geometric interpretation. 
The hyperplane defined by $\sum_{i=1}^n l_i x_i = L$ perfectly separates the set of feasible patterns $P^f(E)=P^p(E)$ 
and the set of non-feasible patterns $\B^n\setminus P^p(E)$ within the unit-hypercube.  In Figure~\ref{all_classes}
we have depicted all five equivalence classes for $n=3$, where the feasible patterns are marked by filled black circles.

\begin{figure}[ht]
  \begin{center}
    \includegraphics[scale=0.67]{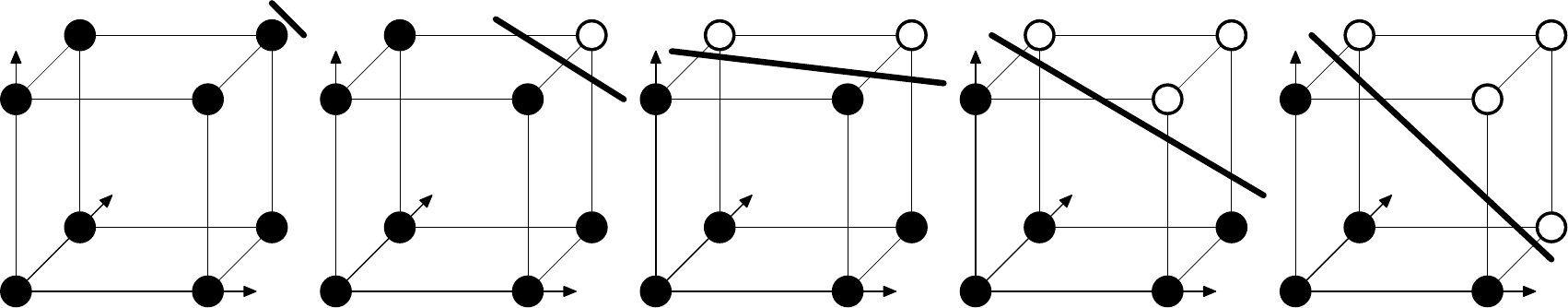}
    \caption{\label{all_classes}All equivalence classes of $\mathbb{P}_3^p$}
  \end{center}
\end{figure}

So a first, simple but finite, algorithm to determine the maximum $\Delta_p(E)$ for given demand $n$, is to 
loop over all equivalence classes in $\mathbb{P}_n^p$ and to compute the respective $\Delta_p(E)$. Of course 
this is possible for rather small $n$ only. Since it is of different interest to explicitly construct 
a complete system of representatives of $\mathbb{P}_n^p$, we present an enumeration algorithm in Section~\ref{sec_enumeration}
before we proceed with ILP approaches in Section~\ref{sec_ILP}. 
\highlightt{Prior to} that we relate our discrete structures 
with another stream of literature in the context of cooperative game theory.

\section{Relation of 1CSP instances to weighted simple games}
\label{sec_simple_games}
In cooperative game theory a \textit{simple game} on $n$ voters is defined as a mapping $v:\B^n\rightarrow\B$ satisfying
$v(\mathbf{0})=0$, $v(\mathbf{1})=1$, and $v(a)\le v(b)$ for all $a,b\in \B^n$ with $a\le b$, i.e., $a_i\le b_i$ for all $1\le i\le n$ 
(cf. \cite{taylor1999simple}). A vector\footnote{Mostly one speaks of subsets $S\subseteq\{1,\dots,n\}$, called coalitions, in the corresponding literature. The vectors we use here correspond to the incidence vectors of 
those sets.} $a\in \B^n$ with $v(a)=1$ is called \textit{winning} and \textit{losing} otherwise.
Each simple game is uniquely characterized by either its set of winning or \highlightt{its} set of losing vectors. A simple game $v$ is 
called \textit{weighted} if \highlight{there} exist weights $w\in\mathbb{R}_{\ge 0}^n$ and a quota $q\in\mathbb{R}_{>0}$ such that
$v(a)=1$ iff $a^{\top}w\ge q$. W.l.o.g.\ we can assume that the quota and the weights are positive integers with 
$1\le w_1\le\dots\le w_n\le q$ and $q\ge 2$.

Given a weighted simple game $v$ represented by weights $w\in\mathbb{Z}_{>0}$ and a quota 
$q\in\mathbb{Z}_{\highlight{\ge 2}}$, we can set $L=q-1$ and $l_i=w_i$ for all $1\le i\le n$. If additionally 
all unit-vectors are losing in $v$, then we have $l_i=w_i\le q-1=L$, i.e., $E=(n,L,l)$ is 
an 1CSP instance, where the losing vectors correspond to the feasible patterns.

For the other direction let $E=(n,L,l)$ be an 1CSP instance with $l\in \mathbb{Z}_{>0}^n$ and 
$L\in\Z_{>0}$. If additionally the all-one vector $\mathbf{1}$ is a non-feasible pattern, then setting $q=L+1$ 
and $w_i=l_i$ for all $1\le i\le n$ yields a weighted simple game $v$.

\section{Enumeration of all pattern-equivalent classes of 1CSP instance}
\label{sec_enumeration}

An equivalence class of \highlight{an} 1CSP instance $E$ is uniquely described by its set $P=P^p(E)\subseteq \B^n$ of feasible patterns.
So we have to enumerate all possible choices for $P$ and subsequently decide which pattern is feasible and which is not.
We observe that infeasibility in Lemma~\ref{lemma_ineq} may happen using only a proper subset of the 
inequalities (\ref{lp_ineqs}).

\begin{lemma}
  \label{lemma_partial_ineq}
  Given two disjoint subsets \highlightt{$P_{\le }$} and $P_>$ of $\B^n$. If
  \begin{eqnarray}
    1 \le l_1 \le  \dots \le l_n \le L, \label{lp_partial}\\
    \sum_{i=1}^{\highlightt{n}} l_i a_i  \le L  && \forall a \in P_{\le},\nonumber\\ 
    \sum_{i=1}^{\highlightt{n}} l_i a_i  \ge L+1 && \forall a \in P_{>},\nonumber\\
    l_1, l_2, \dots l_n, L \in\R_{\ge 0}.\nonumber
  \end{eqnarray}
  does not have a solution, then there can not exist an 1CSP instance $E=(n,L,l)$ with 
  $P_{\le}\subseteq P^p(E)\subseteq \B^n\setminus P_{>}$.
\end{lemma}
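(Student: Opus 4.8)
The plan is to prove the contrapositive directly: assuming such an instance exists, I would exhibit an explicit solution of the system~(\ref{lp_partial}), contradicting the hypothesis that none exists. So suppose, for the sake of contradiction, that there is an 1CSP instance $E=(n,L,l)$ satisfying $P_{\le}\subseteq P^p(E)\subseteq\B^n\setminus P_>$, and write $P:=P^p(E)$.

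First I would invoke Lemma~\ref{lemma_ineq} in the direction that an actual instance furnishes a solution of the full system. Concretely, by the argument in its proof, after multiplying $l$ and $L$ by a suitable positive scalar we may assume that the pair $(l,L)$ satisfies~(\ref{lp_ineqs}) for this particular $P$; that is, $1\le l_1\le\dots\le l_n\le L$, and $\sum_{i=1}^n l_i a_i\le L$ holds for every $a\in P$, while $\sum_{i=1}^n l_i a_i\ge L+1$ holds for every $a\in\B^n\setminus P$.

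The key step is then to verify that this same pair $(l,L)$ already solves the partial system~(\ref{lp_partial}), which constrains only the two subsets $P_{\le}$ and $P_>$. The ordering chain and the nonnegativity conditions are literally identical in both systems. For the remaining inequalities I would feed in the two containments: from $P_{\le}\subseteq P$, every $a\in P_{\le}$ lies in $P$ and hence obeys $\sum_{i=1}^n l_i a_i\le L$; and the inclusion $P\subseteq\B^n\setminus P_>$ is equivalent to $P_>\subseteq\B^n\setminus P$, so every $a\in P_>$ lies in $\B^n\setminus P$ and hence obeys $\sum_{i=1}^n l_i a_i\ge L+1$. Thus $(l,L)$ is a solution of~(\ref{lp_partial}), contradicting the assumption, and the lemma follows.

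I do not expect a genuine obstacle here, since the statement is essentially a monotonicity observation: dropping constraints from~(\ref{lp_ineqs}) can only enlarge the solution set, so infeasibility of the smaller system~(\ref{lp_partial}) rules out every instance whose pattern set is sandwiched between $P_{\le}$ and $\B^n\setminus P_>$. The only point that demands a little care is the logical translation of the sandwich condition $P_{\le}\subseteq P^p(E)\subseteq\B^n\setminus P_>$ into the correct membership facts $P_{\le}\subseteq P$ and $P_>\subseteq\B^n\setminus P$, which are precisely what drive the $\le$- and $\ge$-constraints, respectively.
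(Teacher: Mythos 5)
Your argument is correct and is exactly the reasoning the paper intends: system~(\ref{lp_partial}) is a subsystem of system~(\ref{lp_ineqs}) for any $P$ with $P_{\le}\subseteq P\subseteq\B^n\setminus P_{>}$, so by Lemma~\ref{lemma_ineq} an instance realizing such a $P$ would yield a solution of the partial system. The paper in fact states this lemma without proof, treating it as an immediate consequence of Lemma~\ref{lemma_ineq}, so there is nothing to compare beyond noting that your write-up supplies the omitted details correctly.
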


Next we observe, that some inequalities of (\ref{lp_ineqs}) and (\ref{lp_partial}) may be dominated by
others. For $a\le b$, i.e, $a_i\le b_i$ for all $1\le i\le n$, we clearly have $l^{\top}a\le l^{\top}b$ due 
to $l\ge 0$. Using the special ordering $l_1\le\dots\le l_n$, we can even uncover more dominated inequalities. 
To this end we introduce the following binary relation.

\begin{defi}
  For $a,b\in\B^n$ we write $a \ple b$ iff $\sum\limits_{i=j}^n a_i\le \sum\limits_{i=j}^n b_i$ for all $1\le j\le n$.
\end{defi}

We say that $a$ is \textit{dominated} by $b$. In the context of simple games the relation $\ple$, using 
the reverse ordering of coordinates, is used to define the class of so-called \highlight{complete} simple games, which is a subclass 
of weighted simple games, see \cite{isbell1956class,taylor1999simple}. So the following results are well known 
in a different context \highlight{and} we mention only the facts that we are explicitly using in this paper.

\begin{lemma}
  Let $a,b\in \B^n$ with $a\ple b$. For $l_1\le\dots\le l_n$ we have $l^{\top}a\le l^{\top}b$.
\end{lemma}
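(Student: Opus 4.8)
The plan is to prove the claim by an Abel summation (summation-by-parts) argument, exploiting that $\ple$ compares partial sums taken from the large-index end, which is exactly where the weights $l_i$ are largest. First I would set $c := b - a$, so that $c\in\{-1,0,1\}^n$, and introduce the tail sums $S_j := \sum_{i=j}^n c_i$ for $1\le j\le n$, together with the convention $S_{n+1}:=0$. By the definition of $a\ple b$ we have $\sum_{i=j}^n a_i\le\sum_{i=j}^n b_i$ for every $j$, which is precisely $S_j\ge 0$ for all $1\le j\le n$. It therefore suffices to show $l^{\top}c=\sum_{i=1}^n l_i c_i\ge 0$, since this is equivalent to the desired $l^{\top}a\le l^{\top}b$.

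Next I would rewrite each entry as a difference of consecutive tail sums, $c_i=S_i-S_{i+1}$, and substitute this into $\sum_{i=1}^n l_i c_i$. Reindexing the shifted part of the sum and collecting the coefficient of each $S_i$ yields the telescoped expression
$$\sum_{i=1}^n l_i c_i = l_1 S_1 + \sum_{i=2}^n (l_i-l_{i-1})\,S_i,$$
where the boundary term $l_n S_{n+1}$ vanishes by our convention. The key point is now that every coefficient is non-negative: $l_1\ge 0$ and $l_i-l_{i-1}\ge 0$ for $2\le i\le n$ because the $l_i$ are non-decreasing. Multiplying these against the non-negative tail sums $S_i\ge 0$ gives $l^{\top}c\ge 0$, completing the proof.

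There is no genuine obstacle here; the only thing to be careful about is the index bookkeeping in the summation-by-parts step, that is, correctly identifying that the coefficient of $S_i$ (for $i\ge 2$) is the gap $l_i-l_{i-1}$ rather than $l_i$ itself, and that the endpoint contribution at $i=n+1$ drops out. This is exactly where the monotonicity hypothesis $l_1\le\dots\le l_n$ enters, and it explains why $\ple$ is defined through partial sums anchored at the top index $n$: mass may be freely traded from lighter items to heavier ones as long as the accumulated count from the heavy end never decreases. As a remark, the coordinatewise domination $a\le b$ used just before the definition of $\ple$ is the special case in which every $c_i\ge 0$, hence each $S_j\ge 0$ trivially, so the present lemma strictly generalizes that elementary monotonicity observation.
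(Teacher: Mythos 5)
Your proof is correct and takes essentially the same route as the paper: the paper writes $l_j=\sum_{i=1}^j k_i$ with nonnegative increments $k_i$ and interchanges the order of summation, which is exactly your Abel-summation identity with $k_1=l_1$ and $k_i=l_i-l_{i-1}$ multiplying the tail sums $S_i$. Both arguments rely in the same way on the (implicit) nonnegativity of $l_1$, which holds since the paper assumes $l\in\mathbb{R}_{>0}^n$.
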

\begin{proof}
  Setting $l_j=\sum\limits_{i=1}^j k_i$, the $k_i\ge 0$ are uniquely defined and we have
  $$
    l^{\top}a=\sum_{j=1}^n
\highlightt{\left( k_j\cdot\sum_{i=j}^n a_i \right)}
\le \sum_{j=1}^n \highlightt{\left( k_j\cdot\sum_{i=j}^n  b_i \right)} =l^{\top}b.
  $$
\end{proof}

\begin{cor}
  Let $\mathbf{0}\le a\ple b\le \mathbf{1}$. If $b\in P^p(E)$, then $a\in P^p(E)$. If $a\notin P^p(E)$, then 
  $b\notin P^p(E)$.
\end{cor}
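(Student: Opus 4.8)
The plan is to reduce both assertions to the immediately preceding Lemma, which guarantees that $l^\top a \le l^\top b$ whenever $a \ple b$ and the lengths are sorted as $l_1 \le \dots \le l_n$. First I would recall the normalization $b_i = 1$ adopted in Section~\ref{sec_equivalence}: under it, membership of a binary vector in the set of proper patterns collapses to a single length inequality. Concretely, for any $a \in \B^n$ one has $a \in P^p(E)$ if and only if $l^\top a \le L$, because the integrality requirement $a \in \Z_{\ge 0}^n$ and the upper bounds $a_i \le b_i = 1$ are automatically satisfied by every element of $\B^n$. This is the only place where the normalization is genuinely used, and establishing it cleanly is the small preparatory step I would carry out before anything else.

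With this reduction in hand, the first claim follows at once. Assuming $b \in P^p(E)$, we get $l^\top b \le L$; since $a \ple b$, the preceding Lemma gives $l^\top a \le l^\top b \le L$, and as $a \in \B^n$ this inequality already certifies $a \in P^p(E)$. For the second claim I would simply take the contrapositive of the first: if $a \notin P^p(E)$, then $b \in P^p(E)$ would force $a \in P^p(E)$ by the argument just given, a contradiction, hence $b \notin P^p(E)$. Equivalently, one can argue directly — $a \notin P^p(E)$ means $l^\top a > L$, whence $l^\top b \ge l^\top a > L$ by the Lemma, so again $b \notin P^p(E)$.

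I do not anticipate any real obstacle here: essentially all of the content is carried by the preceding Lemma, and the two statements are logically each other's contrapositive once the reduction to the single inequality $l^\top a \le L$ is made. The only point deserving a moment's care is exactly that reduction, namely confirming that for binary vectors the defining conditions of $P^p(E)$ reduce to the length constraint alone; after that, both halves of the corollary are one-line consequences.
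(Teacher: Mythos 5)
Your proposal is correct and matches the paper's (implicit) argument: the corollary is stated without proof precisely because it follows immediately from the preceding lemma together with the observation that, under the normalization $b_i=1$, a binary vector lies in $P^p(E)$ iff $l^{\top}a\le L$. The contrapositive handling of the second claim is exactly what is intended.
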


\begin{lemma}
  $\B^n$ is a partially ordered set under $\ple$.
\end{lemma}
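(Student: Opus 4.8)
The plan is to verify directly the three defining properties of a partial order — reflexivity, antisymmetry, and transitivity — from the definition $a \ple b \iff \sum_{i=j}^n a_i\le\sum_{i=j}^n b_i$ for all $1\le j\le n$. It is convenient to abbreviate the suffix sums by writing $S_j(a):=\sum_{i=j}^n a_i$, with the convention $S_{n+1}(a):=0$, so that the relation reads simply $a\ple b \iff S_j(a)\le S_j(b)$ for all $1\le j\le n$. Each property then becomes a statement about the family of real numbers $S_1(a),\dots,S_n(a)$ compared coordinatewise.

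Reflexivity and transitivity I would dispatch immediately, since both are inherited from the order $\le$ on $\R$. Reflexivity is just $S_j(a)\le S_j(a)$ for every $j$. For transitivity, if $a\ple b$ and $b\ple c$ then for each fixed $j$ we have $S_j(a)\le S_j(b)$ and $S_j(b)\le S_j(c)$, whence $S_j(a)\le S_j(c)$; as this holds for all $j$, we conclude $a\ple c$.

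The one step that genuinely requires an argument — and hence the main obstacle — is antisymmetry. Suppose $a\ple b$ and $b\ple a$. Then for every $j$ we have both $S_j(a)\le S_j(b)$ and $S_j(b)\le S_j(a)$, so $S_j(a)=S_j(b)$ for all $1\le j\le n$. The key observation is that the suffix sums recover the original vector through the telescoping identity $a_j=S_j(a)-S_{j+1}(a)$. Applying this to both $a$ and $b$ gives $a_j=S_j(a)-S_{j+1}(a)=S_j(b)-S_{j+1}(b)=b_j$ for every $j$, and therefore $a=b$. This completes the verification.

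Since $\ple$ is defined on all of $\B^n$, these three checks establish that $\ple$ is an order relation on the entire cube, and I would close by noting that it is only a \emph{partial} order: comparability can fail, as witnessed for instance by $(1,0)$ and $(0,1)$, which are $\ple$-incomparable.
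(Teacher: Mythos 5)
Your proof is correct: the paper omits a proof of this lemma entirely (citing that these facts are well known from the theory of complete simple games), and your direct verification of reflexivity, transitivity, and antisymmetry via the telescoping identity $a_j=S_j(a)-S_{j+1}(a)$ is exactly the standard argument one would supply. Nothing is missing.
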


\begin{observation}
  $\{a\in\B^n\,:\,\Vert a\Vert_1 \highlightt{ \le 1} \}\subseteq P^p(E)$ for all 1CSP instances $E=(n,L,l)$ due to $l_i\le L$.
\end{observation}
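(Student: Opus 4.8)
The plan is to unwind the definition of $P^p(E)$ and verify membership directly for the finitely many vectors in the set under consideration. First I would observe that $\{a\in\B^n\,:\,\Vert a\Vert_1\le 1\}$ consists of exactly $n+1$ vectors: the all-zero vector $\mathbf{0}$ together with the $n$ unit vectors $e_1,\dots,e_n$, where $e_i$ carries a single $1$ in coordinate $i$ and zeros elsewhere. It therefore suffices to show that each of these $n+1$ vectors lies in $P^p(E)$.

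Next I would recall that, after the normalization to $b_i=1$ adopted above, a vector $a\in\B^n$ is a proper pattern precisely when it is feasible, i.e.\ when $l^{\top}a\le L$. The remaining defining condition $a_i\le b_i$ is automatic, since $a_i\in\{0,1\}$ and $b_i=1$, so that $a_i\le 1=b_i$ holds coordinatewise. Thus the whole claim reduces to checking the single inequality $l^{\top}a\le L$ for each of the $n+1$ candidate vectors.

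For the zero vector this is immediate, as $l^{\top}\mathbf{0}=0\le L$ because $L>0$. For a unit vector $e_i$ one computes $l^{\top}e_i=l_i$, and the standing assumption $0<l_1\le\dots\le l_n\le L$ yields $l_i\le L$. Hence every vector of $\Vert\cdot\Vert_1$-norm at most $1$ is both feasible and proper, which is exactly the assertion of the observation.

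There is no substantive obstacle here: the statement is a direct consequence of the definition of a proper pattern together with the normalization $b_i=1$ and the assumption $l_i\le L$ (precisely the hint appended to the statement). The only point deserving a moment's care is confirming that, in the normalized setting, the properness condition $a_i\le b_i$ imposes nothing beyond membership in $\B^n$, so that feasibility alone decides membership in $P^p(E)$.
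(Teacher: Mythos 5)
Your proof is correct and follows exactly the reasoning the paper intends: the paper states this as an observation justified only by the hint ``due to $l_i\le L$'', and your unwinding of the definition (the set consists of $\mathbf{0}$ and the unit vectors $e_i$, with $l^{\top}\mathbf{0}=0\le L$ and $l^{\top}e_i=l_i\le L$, while $a_i\le b_i=1$ is automatic for binary vectors) is precisely that argument made explicit.
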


With those ingredients we can state the following enumeration algorithm.

\begin{longtable}[l]{rl}
\multicolumn{2}{l}{\texttt{MainProcedure}()} \\
1 & $P_{\le} \gets \{ a \in \B^{\highlightt{n}} \,:\, \Vert a\Vert_1=1 \}$ \\
2 & $P_{>} \gets \emptyset$ \\
3 & $P_u \gets \B^{\highlightt{n}} \; \setminus \;  
\{ a \in \B^{\highlightt{n}} \,:\, \Vert a\Vert_1 \le 1 \}$ \\
4 & RecursiveProcedure($P_{\le}, \; P_{>}, \; P_u$) 
\end{longtable}

\begin{longtable}[l]{rl}
\multicolumn{2}{l}{\texttt{RecursiveProcedure}($P_{\le}$,$P_{>}$,$P_u$)} \\
1 & if system~(\ref{lemma_partial_ineq}) has no solution for $P_{\le}$ and $P_{>}$ \\
2 & $\qquad$ return \\
3 & if $P_u = \emptyset$ $\qquad$ $\rem$ we have found a new  equivalence class  \\
4 & $\qquad$ save $\left\{a\in\B^n\,:\,\exists b\in P_{\le}:\, a\ple b\right\}$ \\
5 & $\qquad$ return \\
6 & choose some pattern $a \in P_u$ \qquad $\rem$ no matter which one \\
7 & $P_{\le}' \gets P_{\le}$\\
8 & remove all patterns from $P_{\le}'$ which are dominated by $a$\\
9 & $P_u'\gets P_u\setminus\left\{b\in \B^n\,:\,b\ple a\right\}$\\
10 & RecursiveProcedure($P_{\le}'\cup\{a\}, \; P_{>}, \; P_u'$) \\
11 & $P_{>}' \gets P_{>}$\\
12 & remove all patterns from $P_{>}'$ which dominate $a$\\
13 & $P_u'\gets P_u\setminus\left\{b\in \B^n\,:\,a\ple b\right\}$\\
14 & RecursiveProcedure($P_{\le}, \; P_{>}'\cup\{a\}, \; P_u'$)
\end{longtable}

\highlight{Here  $P_{\le}$ denotes a subset of the patterns that the algorithm has already classified as 
being feasible. Similarly, $P_{>}$ denotes a subset of the patterns that the algorithm has already classified as 
being non-feasible. Since patterns with a single element have to be feasible by definition, we can initialize 
as done in \texttt{MainProcedure}.}

We know that a pattern $a$ is feasible, if there exists a \highlight{pattern} $b\in P_{\le}$ with $a\ple b$ 
\highlight{-- note that $b$ is feasible}. Similarly, we know that a pattern 
\highlightt{$a$} is non-feasible, if there exists a 
\highlight{pattern} $b\in P_{>}$ with $b\ple a$ \highlight{-- note that $b$ is non-feasible}. \highlight{We remark
that all unclassified patterns are pooled in $P_u$.}

\highlight{In order to save computation time within the check of Inequality system~(\ref{lp_partial}), we try to remove 
as many patterns as possible from $P_{\le}$ and $P_{>}$ in lines 8 and 12 of \texttt{RecursiveProcedure}.}
Since $\B^n$ is a partially ordered set under $\ple$, the constructed 
sets $P_{\le}$ and $P_{>}$ are \highlight{indeed} minimal \highlight{in every iteration}. For $n=9$ this \highlight{approach} 
reduces \highlight{the computation time, due to a decreased number of inequalities in ~(\ref{lp_partial}),} by 
\highlight{a} factor of roughly $50$.

The dominance relation $\ple$ can clearly be checked in $O(n)$. Since those comparisons occur quite often it is
beneficial to compute and store them once for all pairs of patterns. Some comparisons can additionally be avoided
by using:

\begin{observation} 
  For $\mathbf{0}\le a\ple b\le \mathbf{1}$ we have $\operatorname{num}(a)\le \operatorname{num}(b)$, where
  $\operatorname{num}(a):=\sum_{i=1}^{n}=2^{i-1}a_i$.
\end{observation}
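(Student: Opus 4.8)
The plan is to compare the integers $\num(a)$ and $\num(b)$ by their binary expansions, starting from the most significant bit, i.e., from coordinate $n$ (which carries weight $2^{n-1}$). The key point is that the suffix-sum condition defining $a\ple b$ precisely controls the behaviour at the highest-order position where $a$ and $b$ disagree, which is exactly the position that decides the comparison of two integers.

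First I would dispose of the trivial case $a=b$, in which $\num(a)=\num(b)$ holds with equality. Assuming $a\neq b$, I would let $k$ be the largest index with $a_k\neq b_k$, so that $a_i=b_i$ for all $i>k$. Evaluating the defining inequality of $\ple$ at $j=k$ gives $\sum_{i=k}^n a_i\le\sum_{i=k}^n b_i$; since the tails with $i>k$ coincide, this reduces to $a_k\le b_k$. As $a_k,b_k\in\{0,1\}$ and $a_k\neq b_k$, I conclude $a_k=0$ and $b_k=1$.

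It then remains only to read off the comparison of the integers. Because $a_i=b_i$ for all $i>k$, the high-order bits of $\num(a)$ and $\num(b)$ agree, while at the position of weight $2^{k-1}$ the number $b$ carries a $1$ and $a$ carries a $0$. The lower-order contributions (indices $i<k$) amount to at most $\sum_{i=1}^{k-1}2^{i-1}=2^{k-1}-1<2^{k-1}$, which is strictly smaller than the surplus $2^{k-1}$ that $\num(b)$ gains at position $k$; concretely $\num(b)-\num(a)=2^{k-1}+\sum_{i<k}2^{i-1}(b_i-a_i)\ge 1>0$. Hence $\num(a)<\num(b)$, and together with the equality case this yields $\num(a)\le\num(b)$. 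I do not anticipate any genuine obstacle: the only subtlety is selecting $k$ to be the \emph{largest} differing index, so that the agreeing higher bits cancel and the strict inequality at bit $k$ outweighs all lower bits — and this is exactly what the suffix formulation of $\ple$ supplies at $j=k$.
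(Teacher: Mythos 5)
Your proof is correct and complete: the paper states this as an Observation without supplying any proof, and your argument is the natural one. Isolating the largest index $k$ with $a_k\neq b_k$, using the defining inequality of $\ple$ only at $j=k$ to force $a_k=0$, $b_k=1$, and then bounding the lower-order contribution by $2^{k-1}-1$ gives exactly $\num(b)-\num(a)\ge 1$, which establishes the claim (with equality in the case $a=b$).
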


The converse is generally not true, i.e., $\operatorname{num}(a)\le \operatorname{num}(b)$ implies either $a\ple b$ 
or $a$ and $b$ are incomparable. In Figure~\ref{dom_rel} we have depicted the dominance relation, where patterns are 
ordered by the $\operatorname{num}()$ function. Black squares represent the cases $a \ple b$; white ones the cases $a \not\ple b$.

\begin{figure}[ht]
  \begin{center}
    \includegraphics[scale=0.8]{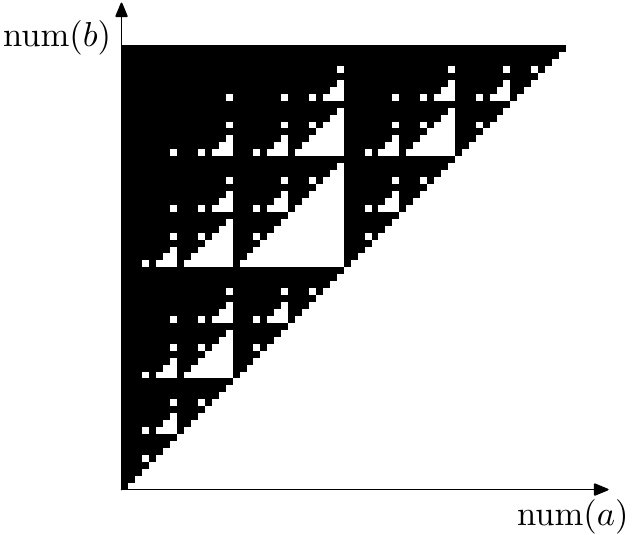}
    \caption{\label{dom_rel}Illustration of the  dominance relation}
  \end{center}
\end{figure} 

\section{Bounds for $\mathbf{\Delta_p(E)}$}
\label{sec_bounds}

Obviously we have $0\le z_C^p(E)\le z_D^f(E)\le n$ for each 1CSP instance $E=(n,L,l)$. The cases with 
$z_D^f(E)=1$ can be completely classified:

\begin{lemma}
  For an 1CSP instance $E=(n,L,l)$ we have\\[-4mm]
  $$
    z_D^f(E)=1\quad\Longleftrightarrow\quad \mathbf{1}\in P^p(E)
    \quad\Longleftrightarrow\quad P^p(E)=\B^n
    \quad\Longleftrightarrow\quad \sum_{i=1}^n l_i\le L.
  $$
\end{lemma}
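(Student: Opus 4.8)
The plan is to prove the chain of three equivalences by establishing a cycle of implications among the four statements, exploiting that each link is either almost immediate from the definitions or follows from the monotonicity results already developed.

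First I would dispose of the easy equivalences on the right. The equivalence $\sum_{i=1}^n l_i \le L \Longleftrightarrow \mathbf{1}\in P^p(E)$ is essentially the definition of a proper pattern: since we have normalized $b_i=1$, the all-one vector $\mathbf{1}$ satisfies $a_i\le b_i$ trivially, so $\mathbf{1}\in P^p(E)$ holds iff $l^\top\mathbf{1}=\sum_{i=1}^n l_i\le L$. Next, $\mathbf{1}\in P^p(E)\Longleftrightarrow P^p(E)=\B^n$: the backward direction is trivial since $\mathbf{1}\in\B^n$, while the forward direction is exactly the Corollary following the dominance lemma, because every $a\in\B^n$ satisfies $a\ple\mathbf{1}$ (indeed $\sum_{i=j}^n a_i\le n-j+1=\sum_{i=j}^n 1$ for all $j$), so feasibility of $\mathbf{1}$ forces feasibility of all of $\B^n$.

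It remains to tie these to the condition $z_D^f(E)=1$. For the direction $P^p(E)=\B^n\Rightarrow z_D^f(E)=1$, I would observe that if every binary vector is feasible then $\mathbf{1}$ itself is a feasible pattern, and in the transformed instance with all $b_i=1$ the single pattern $x_{\mathbf 1}=1$ (taking one copy of the pattern $\mathbf{1}$) already satisfies $A(P)x=b=\mathbf{1}$, giving an integer solution of value $1$; since $z_D^f(E)\ge 1$ always (the demand is nonzero, so at least one stock object is needed), we get $z_D^f(E)=1$. For the reverse direction $z_D^f(E)=1\Rightarrow \mathbf{1}\in P^p(E)$, suppose $z_D^f(E)=1$. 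Then there is a single feasible pattern $a\in P^f(E)$ with $a=b=\mathbf{1}$, i.e.\ the one chosen pattern must cover every item exactly once; hence $\mathbf{1}=a\in P^f(E)=P^p(E)$ (recall that with $b=\mathbf 1$ feasible and proper patterns coincide for patterns in $\B^n$).

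The steps are all short, so the main thing to get right is the bookkeeping around the normalization $b_i=1$ and the identity $P^f(E)=P^p(E)$ restricted to $\B^n$, together with the harmless lower bound $z_D^f(E)\ge 1$. The one place that deserves a careful word is the implication $z_D^f(E)=1\Rightarrow\mathbf{1}\in P^p(E)$: here I must argue that an integer solution of total value $1$ to $A(P)x=b$ necessarily uses a single pattern equal to $b=\mathbf{1}$, which follows because the $x_i$ are nonnegative integers summing to $1$, so exactly one $x_i$ equals $1$ and the corresponding column of $A(P)$ must equal $b$. I expect no genuine obstacle; the proof is a routine cycle of implications once the definitions and the earlier Corollary are invoked.
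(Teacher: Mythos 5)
Your proof is correct. The paper states this lemma without proof, so there is nothing to compare against; your cycle of implications --- $\sum_i l_i\le L \Leftrightarrow \mathbf{1}\in P^p(E)$ by the definition of a proper pattern with $b=\mathbf{1}$, then $\mathbf{1}\in P^p(E)\Rightarrow P^p(E)=\B^n$ via the dominance corollary (every $a\in\B^n$ satisfies $a\ple\mathbf{1}$), then $P^p(E)=\B^n\Rightarrow z_D^f(E)=1$ by using the pattern $\mathbf{1}$ once, and finally $z_D^f(E)=1\Rightarrow\mathbf{1}\in P^p(E)$ since a nonnegative integer solution of total value $1$ must consist of a single pattern equal to $b=\mathbf{1}$ --- is exactly the routine argument the authors evidently regarded as obvious, and you correctly handle the one subtlety (that the single pattern realizing $z_D^f(E)=1$, though drawn from $P^f(E)$, must equal $\mathbf{1}$ and hence is proper).
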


\begin{cor}
  If $z_D^f(E)=1$, then we have $z_C^p(E)=1$ and $\Delta_p(E)=0$.
\end{cor}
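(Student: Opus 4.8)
The plan is to derive both claims as immediate consequences of the preceding Lemma together with the general bound $0\le z_C^p(E)\le z_D^f(E)$ already recorded at the start of this section. Assuming $z_D^f(E)=1$, the Lemma gives $\mathbf{1}\in P^p(E)$; that is, the all-ones pattern is a proper pattern. Recall that after the normalization $b_i=1$ we have $P^p(E)\subseteq\B^n$, so every pattern entry lies in $\{0,1\}$ -- a fact I will use for the lower bound.

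First I would establish $z_C^p(E)\le 1$. The all-ones pattern $\mathbf{1}$ alone satisfies the demand $b=\mathbf{1}$, so assigning it multiplicity $1$ and every other pattern multiplicity $0$ yields a feasible point of the proper relaxation with objective value $1$. (Alternatively one may simply invoke $z_C^p(E)\le z_D^f(E)=1$.)

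Next I would prove the matching lower bound $z_C^p(E)\ge 1$. Fix any feasible $x\ge 0$ of the proper relaxation and inspect the demand constraint of a single item, say item $1$, namely $\sum_j a^j_1 x_j = b_1 = 1$. Since each $a^j_1\in\{0,1\}$ and $x_j\ge 0$, we get $\sum_j x_j \ge \sum_j a^j_1 x_j = 1$, so the objective is at least $1$ on the whole feasible region. Combining the two bounds gives $z_C^p(E)=1$, and hence $\Delta_p(E)=z_D^f(E)-z_C^p(E)=1-1=0$.

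There is essentially no hard step here: the only point requiring a moment's care is the lower bound, where one must remember that proper patterns are $0/1$-vectors after the normalization to $b_i=1$, so that a single demand constraint already forces $\sum_j x_j\ge 1$; the upper bound is immediate from the preceding Lemma.
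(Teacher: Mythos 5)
Your proof is correct and matches the intended argument: the paper leaves this as an immediate corollary, with the upper bound $z_C^p(E)\le z_D^f(E)=1$ coming from the pattern $\mathbf{1}\in P^p(E)$ and the lower bound $z_C^p(E)\ge 1$ established (a little later in the same section) by essentially the same covering-constraint argument you give, just summed over all $n$ items instead of a single one. No gap; your single-coordinate version of the lower bound is a harmless simplification.
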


Also the cases where $z_D^f(E)=2$ can be characterized completely:

\begin{lemma}
  \highlight{We have $z_{\highlightt{D}}^p(E)>2$ if and only if $\{a,\mathbf{1}-a\}\not\subseteq P^p(E)$ for all $a\in\B^n$.}
\end{lemma}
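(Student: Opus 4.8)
The plan is to establish the logically equivalent statement obtained by negating both sides, namely that $z_D^p(E)\le 2$ holds precisely when there is some $a\in\B^n$ with $a\in P^p(E)$ \emph{and} $\mathbf{1}-a\in P^p(E)$. The guiding idea is the bin-packing reading of $z_D^p(E)$: since all demands equal $1$, an integer solution of value $2$ is nothing but a splitting of the $n$ items into two proper patterns whose sum is the full demand vector $\mathbf{1}$. Recall also that $z_D^p(E)=z_D^f(E)$, so the minimization is over all feasible patterns.

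For the easy direction I would assume such an $a$ exists. Since $2a=\mathbf{1}$ is impossible over the integers, we have $a\neq\mathbf{1}-a$, so selecting the two distinct patterns $a$ and $\mathbf{1}-a$, each with multiplicity one, yields $a+(\mathbf{1}-a)=\mathbf{1}=b$. This is a feasible solution of the integer program defining $z_D^p(E)$ with objective value $2$, hence $z_D^p(E)\le 2$.

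For the converse I would assume $z_D^p(E)\le 2$ and pick an optimal integer solution $x$. The key observation is that, because $b=\mathbf{1}$, no pattern with a coordinate $\ge 2$ and no multiplicity $\ge 2$ can appear in the support of $x$ without overshooting some $b_i=1$; this is exactly why the support of $x$ consists only of proper (binary) patterns. In the case $z_D^p(E)=2$ the solution therefore reduces to two distinct proper patterns $p,q\in P^p(E)$, each taken once, with $p+q=\mathbf{1}$, so setting $a=p$ gives $\mathbf{1}-a=q\in P^p(E)$. In the remaining case $z_D^p(E)=1$ I would invoke the preceding lemma to get $\mathbf{1}\in P^p(E)$, and the Observation (via $\Vert\mathbf{0}\Vert_1\le 1$) to get $\mathbf{0}\in P^p(E)$; taking $a=\mathbf{1}$ then exhibits both $a$ and $\mathbf{1}-a=\mathbf{0}$ in $P^p(E)$.

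The one step requiring care is this reduction from an arbitrary feasible integer solution to a complementary pair $a,\mathbf{1}-a$: the argument rests entirely on the unit-demand vector $b=\mathbf{1}$ forcing binary patterns and unit multiplicities. Once that is in place, an objective value of $2$ literally means a split of the $n$ items into two feasible bins, and the claimed equivalence is immediate.
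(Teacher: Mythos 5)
Your proof is correct and takes essentially the same route as the paper: both argue the contrapositive, identifying $z_D^p(E)\le 2$ with the existence of a complementary pair $a,\mathbf{1}-a$ of proper patterns, with the value-$1$ case absorbed by noting $\mathbf{1},\mathbf{0}\in P^p(E)$. The paper is merely terser, phrasing the value-$2$ case as the existence of feasible $a,b$ with $a+b\ge\mathbf{1}$ and then replacing $b$ by $\mathbf{1}-a$, whereas you justify the reduction of an optimal integer solution to two unit-multiplicity binary patterns explicitly.
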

\begin{proof}
  \highlight{Choosing $a=\mathbf{1}$ we conclude $\mathbf{1}\notin P^p(E)$, since $\mathbf{0}\in P^p(E)$. Thus we can assume} 
  $z_D^f(E)>1$. We have $z_D^f(E)=2$ iff there exist feasible patterns $a,b\in P^p(E)$ with $a+b\ge\mathbf{1}$. Thus 
  $b \highlightt{=} \mathbf{1}-a\in P^p(E)$.
\end{proof}

We remark that simple games, where not both coalition vectors $a$ and $\mathbf{1}-a$ can be losing, are called \textit{proper}.

The optimal solution of the proper relaxation is \highlight{given} by non-negative reel multipliers $\gamma_a$ satisfying
\begin{equation}
  \sum_{a\in P^p(E)} \gamma_a\cdot a=\mathbf{1}
  \quad\text{and}\quad
  z_C^p(E)=\sum_{a\in P^p(E)} \gamma_a. \label{eq_multipliers}
\end{equation}

\begin{lemma}
  $z_C^p(E)\ge 1$ \highlightt{for any instance $E$ of 1CSP.}
\end{lemma}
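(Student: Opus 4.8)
The plan is to read off the claim directly from the optimal multiplier description in~(\ref{eq_multipliers}). Recall that, after the normalization $b=\mathbf{1}$, an optimal solution of the proper relaxation is described by nonnegative reals $\gamma_a$, $a\in P^p(E)$, satisfying $\sum_{a\in P^p(E)}\gamma_a\cdot a=\mathbf{1}$ and $z_C^p(E)=\sum_{a\in P^p(E)}\gamma_a$. The underlying intuition is that every pattern $a\in\B^n$ can cover any one fixed item at most once, so just covering that single item already forces the total multiplier mass to be at least $1$.

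Concretely, I would fix an arbitrary coordinate $i\in\{1,\dots,n\}$ and project the vector identity onto it. Since the $i$-th coordinate of $\mathbf{1}$ equals $1$, the corresponding scalar constraint reads
$$
  \sum_{a\in P^p(E)} \gamma_a\, a_i = 1.
$$
Now I use that $a\in\B^n$ implies $a_i\le 1$, together with $\gamma_a\ge 0$, to estimate
$$
  1=\sum_{a\in P^p(E)} \gamma_a\, a_i \;\le\; \sum_{a\in P^p(E)} \gamma_a \;=\; z_C^p(E),
$$
which is precisely the asserted bound.

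The only point that deserves a word of justification is that the proper relaxation is genuinely solvable, so that the multipliers in~(\ref{eq_multipliers}) exist in the first place. This follows from the Observation: all $n$ patterns $a\in\B^n$ with $\Vert a\Vert_1=1$ lie in $P^p(E)$, and their sum equals $\mathbf{1}$; this exhibits a feasible point of $\{x\ge 0\,:\,A(P^p(E))x=\mathbf{1}\}$, while the objective $\sum_i x_i$ is bounded below by $0$, so the minimum is attained and the representation~(\ref{eq_multipliers}) is available. I do not expect any genuine obstacle here, since the argument reduces to a single coordinate inequality; the mild subtlety about existence of an optimizer is dispatched immediately by the Observation.
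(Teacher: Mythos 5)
Your proof is correct and follows essentially the same route as the paper: both arguments start from the multiplier identity~(\ref{eq_multipliers}) and exploit that the entries of a binary pattern are at most $1$. The paper sums the identity over all $n$ coordinates and uses $\Vert a\Vert_1\le n$ to get $n\cdot\sum_a\gamma_a\ge n$, whereas you project onto a single coordinate and use $a_i\le 1$ --- an equivalent computation (the paper's all-coordinate version is the one that tightens to the refinement $z_C^p(E)\ge\frac{n}{n-1}$ in the following lemma, and your remark on the existence of the optimal multipliers is a harmless extra).
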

\begin{proof}
  From~(\ref{eq_multipliers}) we conclude
  $$
    n\cdot\sum_{a\in P^b(E)} \gamma_a=\sum_{a\in P^p(E)} \gamma_a \cdot n \ge \sum_{a\in P^p(E)} \gamma_a\cdot\Vert a\Vert_1
    =\Vert \mathbf{1}\Vert_1=n. 
  $$\\[-8mm]
\end{proof}

The above proof can be slightly tightened if $\mathbf{1}\notin P^p(E)$.

\begin{lemma}
  If $z_D^f(E)=2$, then $z_C^p(E)\ge \frac{n}{n-1}$ and $\Delta_p(E)\le \frac{n-2}{n-1}$.
\end{lemma}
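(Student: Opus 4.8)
The plan is to sharpen the counting argument that was used to establish $z_C^p(E)\ge 1$ in the preceding lemma, now exploiting that the all-one pattern has become infeasible. First I would observe that the hypothesis $z_D^f(E)=2$ forces $z_D^f(E)>1$, and hence, by the characterization of the case $z_D^f(E)=1$, that $\mathbf{1}\notin P^p(E)$. Since $\mathbf{1}$ is the unique vector in $\B^n$ with $\Vert a\Vert_1=n$, every proper pattern $a\in P^p(E)$ consequently satisfies $\Vert a\Vert_1\le n-1$.

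Next I would feed this improved bound into the optimal multipliers $\gamma_a\ge 0$ of the proper relaxation, which by~(\ref{eq_multipliers}) satisfy $\sum_{a\in P^p(E)}\gamma_a\cdot a=\mathbf{1}$ together with $z_C^p(E)=\sum_{a\in P^p(E)}\gamma_a$. Replacing the crude estimate $\Vert a\Vert_1\le n$ used in the previous lemma by the sharper $\Vert a\Vert_1\le n-1$ gives
$$
  (n-1)\cdot z_C^p(E)=\sum_{a\in P^p(E)}\gamma_a\cdot(n-1)\ge\sum_{a\in P^p(E)}\gamma_a\cdot\Vert a\Vert_1=\Vert\mathbf{1}\Vert_1=n,
$$
so that $z_C^p(E)\ge\frac{n}{n-1}$.

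Finally, since $z_D^f(E)=2$ by assumption, the proper gap satisfies $\Delta_p(E)=z_D^f(E)-z_C^p(E)=2-z_C^p(E)\le 2-\frac{n}{n-1}=\frac{n-2}{n-1}$, which is exactly the claimed bound. I do not expect any genuine obstacle in this argument: the only step requiring a moment of care is the passage from $\mathbf{1}\notin P^p(E)$ to the uniform weight bound $\Vert a\Vert_1\le n-1$, and this is immediate because the all-one vector is the sole binary vector of full weight $n$.
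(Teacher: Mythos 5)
Your argument is correct and coincides with the paper's own proof: both derive $\mathbf{1}\notin P^p(E)$ from $z_D^f(E)>1$, conclude $\Vert a\Vert_1\le n-1$ for all proper patterns, and then apply the multiplier identity~(\ref{eq_multipliers}) to obtain $(n-1)\cdot z_C^p(E)\ge n$ and hence the stated bounds. No differences worth noting.
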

\begin{proof}
  Since $z_D^f(E)\neq 1$ we have $\mathbf{1}\notin P^p(E)$ so that $\Vert a\Vert_1\le n-1$ for 
  all patterns $a\in P^p(E)$. Combining this with~(\ref{eq_multipliers}) yields
  $$
    (n-1)\cdot\sum_{a\in P^p(E)} \gamma_a=\sum_{a\in P^p(E)} \gamma_a \cdot (n-1) \ge \sum_{a\in P^p(E)} 
    \gamma_a\cdot\Vert a\Vert_1=\Vert \mathbf{1}\Vert_1=n. 
  $$ 
  Thus we have $z_C^p(E)\ge \frac{n}{n-1}$ and $\Delta_p(E)=z_D^f(E)-z_C^p(E)\le \frac{n-2}{n-1}$.
\end{proof}

\begin{lemma}
  If $z_D^f(E)>2$, then 
\highlightt{$z_C^p(E)>2$ and}
$\Delta_p(E)<z_D^f(E)-2$.
\end{lemma}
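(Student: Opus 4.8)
The two assertions are in fact equivalent: since $\Delta_p(E)=z_D^f(E)-z_C^p(E)$, the inequality $\Delta_p(E)<z_D^f(E)-2$ holds if and only if $z_C^p(E)>2$. So the plan is to prove the single estimate $z_C^p(E)>2$, from which the bound on $\Delta_p(E)$ follows immediately.

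The starting point is the characterization in the preceding lemma: since $z_D^f(E)>2$, no pattern $a$ admits both $a\in P^p(E)$ and $\mathbf{1}-a\in P^p(E)$. Hence, for every feasible pattern $a\in P^p(E)$, its complement $\mathbf{1}-a$ is non-feasible. I would translate this into two weight inequalities: feasibility of $a$ gives $l^{\top}a\le L$, while non-feasibility of $\mathbf{1}-a$ gives $l^{\top}(\mathbf{1}-a)>L$, i.e. $l^{\top}a<\sum_{i=1}^n l_i-L$. Adding these two inequalities yields the key bound $2\,l^{\top}a<\sum_{i=1}^n l_i$, that is $l^{\top}a<\tfrac12\sum_{i=1}^n l_i$ for every $a\in P^p(E)$.

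With this in hand I would invoke the multiplier representation~(\ref{eq_multipliers}) of an optimal proper solution. Multiplying the identity $\sum_{a\in P^p(E)}\gamma_a\,a=\mathbf{1}$ from the left by $l^{\top}$ gives $\sum_{a\in P^p(E)}\gamma_a\,(l^{\top}a)=\sum_{i=1}^n l_i$. Estimating every term with the bound just obtained (and noting that some $\gamma_a>0$, as $z_C^p(E)\ge 1$) produces the strict inequality $\sum_{i=1}^n l_i<\tfrac12\bigl(\sum_{i=1}^n l_i\bigr)\sum_{a\in P^p(E)}\gamma_a=\tfrac12\bigl(\sum_{i=1}^n l_i\bigr)z_C^p(E)$. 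Dividing by the positive quantity $\sum_{i=1}^n l_i$ gives $z_C^p(E)>2$, and hence $\Delta_p(E)=z_D^f(E)-z_C^p(E)<z_D^f(E)-2$.

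The only genuinely nontrivial step is the second one: recognizing that the preceding lemma guarantees non-feasibility of the complement $\mathbf{1}-a$ for \emph{every} feasible pattern simultaneously, and that combining feasibility of $a$ with non-feasibility of $\mathbf{1}-a$ collapses to the single clean bound $l^{\top}a<\tfrac12\sum_{i=1}^n l_i$. Everything after that is a one-line weighting argument identical in spirit to the proofs of the earlier lower bounds on $z_C^p(E)$, so no case distinction on the magnitude of $\sum_{i=1}^n l_i$ relative to $L$ should be required.
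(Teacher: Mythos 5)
Your proof is correct and follows essentially the same route as the paper: both use the fact that $z_D^f(E)>2$ forces $\mathbf{1}-a\notin P^p(E)$ for every feasible $a$, deduce $l^{\top}a<\tfrac12\, l^{\top}\mathbf{1}$, and then multiply the multiplier identity~(\ref{eq_multipliers}) by $l$ to get $z_C^p(E)>2$. The only (cosmetic) difference is that the paper first normalizes so that some feasible pattern satisfies $l^{\top}a=L$ in order to conclude $L<\tfrac12\, l^{\top}\mathbf{1}$, whereas you bound each feasible pattern directly, which slightly streamlines the argument.
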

\begin{proof}
  W.l.o.g.\ we assume that the parameters $L$ and $l_i$ of $E=(n,L,l)$ are integers and that there exists a feasible
  pattern $a\in P^p(E)$ with $l^{\top}a=L$, since we may otherwise decrease $L$ to obtain an equivalent 
  representation with smaller $L$. From $z_D^f(E)>2$ we conclude $\mathbf{1}-a\notin P^p(E)$ so that $l^{\top}(\mathbf{1}-\highlightt{a})>L$.
  Thus  $L<\highlightt{\frac12 {l^{\top}\mathbf{1}}}$.
 Multiplying equation~(\ref{eq_multipliers}) with vector $l$ gives
  $$
    \frac{l^{\top}\mathbf{1}}{2}\sum_{a\in P^p(E)} \gamma_a> \sum_{a\in P^p(E)} \gamma_a\cdot a^{\top}l=\mathbf{1}^{\top}l,
  $$
  so that $z_C^p(E)=\sum_{a\in P^p(E)} \highlightt{ \gamma_a}>2$.
\end{proof}

\begin{cor}
  For $z_D^f(E)=3$ we have $\Delta_p(E)<1$.
\end{cor}

For $z_D^f(E)=4$ we can \highlightt{also} conclude $z_C^p(E)>2$ and $\Delta_p(E)<2$ from $z_D^f(E)\le\frac{4}{3}\cdot\left\lceil z_C^p(E)\right\rceil$, 
see \cite{chan1998worst} for the later relative bound.

\highlight{We remark that the instances with $z_D^p(E)\in \{n-1,n\}$ can be easily characterized. Since their number 
is in $O(n)$, we
abstain from stating the details and provide exemplary enumeration results for $n=8$ in Table~\ref{tab2}.}

\begin{table}[htp]
  \begin{center}
    \caption{\highlight{Number of equivalence classes for $n=8$ and a given $z_D^p(E)$-value\label{tab2}}}
      
    \smallskip
 
    \begin{tabular}{rrrrrrrrrr}
      $z_D^p(E)$ & 1 & 2 & 3 & 4 & 5 & 6 & 7 & 8 \\   
      \#         & 1 & 1363847 & 1277944 & 56895 & 1992 & 103 & 8 & 1
    \end{tabular}
  \end{center}
\end{table}

\section{Integer linear programming approaches}
\label{sec_ILP}

Assume that we are not interested in all equivalence classes of 1CSP but only in those with $\Delta_p(E)\ge \delta$ for 
some parameter $\delta\ge 0$. For the search for proper non-IRUP instances we may set $\delta=1$ and for the search of 
the largest possible $\Delta_p(E)$ for a given demand $n$ we may update $\delta$ during a search algorithm. In the 
following subsections we present two algorithmic approaches.

\subsection{A tailored branch-and-bound algorithm}
\label{subsec_bandb}
We can easily convert the enumeration algorithm from Section~\ref{sec_enumeration} into \highlightt{a} branch-and-bound algorithm 
with some additional cuts. To this end we state:

\begin{lemma}
  Let $E=(n,L,l)$ be an 1CSP instance and $U,V$ be two subsets of $\B^n$ with $V\subseteq P^p(E)\subseteq U$. 
  With this we have $z_C(U,E)\le Z_C(V,E)$ and $z_D(U,E)\le Z_D(V,E)$.
\end{lemma}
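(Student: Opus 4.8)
The plan is to prove both inequalities by a single monotonicity argument: enlarging the set of available patterns can only make the (integer) covering problem easier, so its optimal value cannot increase. Note that the hypothesis $V\subseteq P^p(E)\subseteq U$ yields in particular $V\subseteq U$, and this inclusion is all I shall use; the role of $P^p(E)$ is merely to fix the context of the branch-and-bound search, in which $V$ collects patterns already certified feasible and $U$ an outer approximation of $P^p(E)$.

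First I would recall that, writing $b=\mathbf{1}$, the quantity $z_C(V,E)$ is the minimum of $\sum_{a\in V}x_a$ over all $x\in\R_{\ge 0}^{|V|}$ with $\sum_{a\in V}x_a\cdot a=b$, and analogously for $z_C(U,E)$ over $U$, with $z_D$ the corresponding integer versions. The key step is a zero-padding (lifting) map: given any feasible point $x$ for the $V$-problem, define $y\in\R_{\ge 0}^{|U|}$ by $y_a=x_a$ for $a\in V$ and $y_a=0$ for $a\in U\setminus V$. Then
$$\sum_{a\in U}y_a\cdot a=\sum_{a\in V}x_a\cdot a=b,$$
so $y$ is feasible for the $U$-problem, and it carries the same objective value $\sum_{a\in U}y_a=\sum_{a\in V}x_a$. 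Hence every feasible solution of the $V$-problem yields a feasible solution of the $U$-problem of equal cost, which forces $z_C(U,E)\le z_C(V,E)$. For the integer statement the very same map is used: if $x\in\Z_{\ge 0}^{|V|}$ then the padded vector $y$ lies in $\Z_{\ge 0}^{|U|}$, so the lifting respects integrality and gives $z_D(U,E)\le z_D(V,E)$.

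The only point requiring a word of care is feasibility. If the $V$-problem has no feasible point, then $z_C(V,E)=z_D(V,E)=+\infty$ by the usual convention and both inequalities hold vacuously; if it is feasible, the lifting above shows the $U$-problem is feasible as well. I would additionally remark that, since the unit patterns $\{a\in\B^n:\Vert a\Vert_1=1\}$ lie in $P^p(E)\subseteq U$ by the Observation, the $U$-problem is in fact always feasible with value at most $n$. There is no genuine obstacle here: the statement is a routine instance of the monotonicity of an optimal value under relaxation of the feasible region, and the only thing to get right is that the padding map preserves both the constraint $A(\cdot)\,x=b$ and the linear objective, which the displayed identity makes transparent.
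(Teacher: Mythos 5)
Your proof is correct and uses essentially the same argument as the paper: extending any feasible solution of the $V$-problem to the $U$-problem by inserting zeros for the patterns in $U\setminus V$, which preserves both the constraint and the objective (and integrality). The extra remarks on feasibility and the infeasible case are fine but not needed beyond what the paper states.
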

\begin{proof}
  \highlight{Each feasible solution for pattern set $V$, i.e., each vector $x$ with $A(V)x=\mathbf{1}$, can be extended 
  to a feasible solution for pattern set $U$ by inserting zeros for all patterns in $U\backslash V$.}
\end{proof}

\begin{cor}
  \label{cor_cut}
  If $V\subseteq P^p(E)\subseteq U\subseteq \B^n$ for an 1CSP instance $E=(n,L,l)$, then 
  $\Delta_p(E)\le z_D(U,E)-z_C(V,E)$.
\end{cor}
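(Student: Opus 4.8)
The plan is to collapse $\Delta_p(E)$ into a single difference of objective values over the set $P^p(E)$ and then to squeeze that difference between the quantities attached to $V$ and $U$ by means of the preceding monotonicity lemma. First I would use the definitions from Section~\ref{sec_notation}, namely $z_D(P^p(E),E)=z_D(P^f(E),E)=z_D^f(E)$ together with $z_C(P^p(E),E)=z_C^p(E)$, to rewrite
$$
  \Delta_p(E)=z_D^f(E)-z_C^p(E)=z_D(P^p(E),E)-z_C(P^p(E),E).
$$
The hypothesis $V\subseteq P^p(E)\subseteq U$ then places the only remaining unknown set, $P^p(E)$, between the two sets whose objective values we are permitted to evaluate.

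Next I would invoke the monotonicity lemma twice along the chain $V\subseteq P^p(E)\subseteq U$, which yields the two sandwiches $z_D(U,E)\le z_D(P^p(E),E)\le z_D(V,E)$ and $z_C(U,E)\le z_C(P^p(E),E)\le z_C(V,E)$. The driving fact is exactly the one isolated in the lemma's proof: adjoining further patterns to a set can only decrease a minimization objective, since any feasible multiplier vector extends by zeros on the new coordinates, so a larger pattern set produces the smaller optimum.

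To finish I would estimate the two terms of $\Delta_p(E)=z_D(P^p(E),E)-z_C(P^p(E),E)$ in the directions that enlarge the difference: replace the minuend by its upper estimate $z_D(V,E)$ and the subtrahend by its lower estimate $z_C(U,E)$. This immediately produces
$$
  \Delta_p(E)\le z_D(V,E)-z_C(U,E),
$$
which is the asserted cut.

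The main obstacle I anticipate is precisely the directional bookkeeping in this last step. Because both objectives decrease as the pattern set grows, the two monotonicity chains run in parallel, and the only delicate point is to attach each objective to the correct endpoint of the chain: the integer objective $z_D$ to the smaller, already-certified set $V$, and the continuous objective $z_C$ to the larger, still-admissible set $U$. This is exactly the pairing that makes the right-hand side an over-estimate of $\Delta_p(E)$, and it is what turns the monotonicity lemma into a usable pruning criterion for the branch-and-bound procedure of Section~\ref{subsec_bandb}: a branch may be discarded as soon as $z_D(V,E)-z_C(U,E)<\delta$.
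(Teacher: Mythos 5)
Your derivation is mathematically sound and is exactly the intended argument: the corollary carries no separate proof in the paper, being an immediate double application of the preceding monotonicity lemma (whose zero-padding justification you correctly reuse) together with the identity $z_D(P^p(E),E)=z_D(P^f(E),E)$ from Section~\ref{sec_notation}. However, there is a discrepancy you should have flagged instead of papering over: what you prove is $\Delta_p(E)\le z_D(V,E)-z_C(U,E)$, while the printed statement reads $\Delta_p(E)\le z_D(U,E)-z_C(V,E)$, and you nevertheless call your inequality ``the asserted cut.'' These are not the same, and the printed version is in fact false as written: taking $V=\{e_1,\dots,e_n\}$ and $U=\B^n$, which is admissible for every instance $E$ of demand $n$ since unit patterns are always proper, gives $z_D(U,E)-z_C(V,E)=1-n<0\le \Delta_p(E)$. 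Indeed, since the lemma's monotonicity gives $z_D(U,E)\le z_D(P^p(E),E)$ and $z_C(V,E)\ge z_C(P^p(E),E)$, the printed right-hand side is a \emph{lower} bound on $\Delta_p(E)$, not an upper bound; the corollary evidently contains a typo with $U$ and $V$ interchanged. Your pairing --- the integer objective $z_D$ with the certified subset $V$, the continuous objective $z_C$ with the admissible superset $U$ --- is the one that is both valid and actually usable as the pruning test $z_D(V,E)-z_C(U,E)<\delta$ in the branch-and-bound of Subsection~\ref{subsec_bandb}, as your closing remark correctly observes. So the verdict is: your proof establishes the corrected statement by the same route the paper intends, and the only missing ingredient is an explicit acknowledgment that the statement as printed cannot be what is meant.
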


Our first modification
of the enumeration algorithm is the extension of the lines 1 and 2 in \texttt{RecursiveProcedure} with the check from Corollary~\ref{cor_cut}.

Depending on the chosen value of $\delta$ we can also utilize some of the bounds from Section~\ref{sec_bounds} to start the algorithm 
with a non-empty set $P_{>}$. For, e.g., $\delta\ge 1$ we know $z_D^f(E)>3$ so that we can set $P_{>}=\left\{a\in\B^n\,:\,\Vert a\Vert_1=n-2\right\}$ 
\highlighttt{and remove each pattern $a\in\B^n$ with $\Vert a\Vert_1\ge n-2$ from $P_u$}.
\highlight{Even more, every insertion of a pattern $a$ into $P_{\le}$ may force some patterns to be non-feasible. If we can assume $z_D^f(E)>2$, we especially have that $1-a$ is non-feasible and can be put into $P_{>}$.} \highlighttt{Moreover, all patterns $b\succeq 1-a$ are non-feasible and can be 
removed from $P_u$.} \highlight{As remarked, $\delta\ge 1$ implies $z_D^f(E)>3$, so that $1-c$ has to 
be non-feasible whenever there are feasible patterns $a$, $b$ with $a+b=c$.} \highlighttt{Again, all patterns $a'\succeq 1-c$ are non-feasible too 
and can be removed from $P_u$.}

Because of the huge number of potential equivalence classes, the strategy of choosing  pattern $a$ from $P_u$ in line 6 of
\texttt{RecursiveProcedure} is really important. The best branching strategy we found is to choose a pattern $a \in P_u$ 
with the maximum positive multiplier in the optimal solution for the set of feasible patterns $\B^n\setminus \left\{a\in\B^n\,\:\,\exists b\in P_{>}:
\, b\ple a\right\}$. Sometimes the optimal solution has no intersection with $P_u$. In this case we can choose the branching pattern at random. 
Indeed this happens  in less than 0.01\% of all cases. This strategy reduces the \highlight{search} space of about $1000$ times in comparison 
to a random choice. 

The B\&B algorithm presented above was implemented in C++, where we used \highlight{a} self implemented LP-solver with exact arithmetic. Making
use of Intels Streaming SIMD Extensions and special shortcuts for our LP instances, our implementation of an LP-solver is about 
30 times faster than the COIN-OR LP-solver.\footnote{Cf.~\cite{kurz2012minimum}, where the author also uses a self implemented LP 
solver to enumerate the weighted simple games with $n=9$ voters.} As hardware we have used an Intel Core i7 with 4 GB RAM.

\subsection{A direct integer linear programming formulation}
\label{subsec_ilp_direct}
Instead of implementing a tailored B\&B algorithm one can also formulate the problem of the maximization of $\Delta_p(E)$ for 
a given demand $n$ as an integer programming problem and use off-the-shelf ILP solvers. To this end we describe the set $P^p(E)$ 
of feasible patterns by binary variables $y_a\in\B$ for all $a\in\B^n$ and identify $P^p(E)=\{a\in\B^n\,:\,y_a=1\}$. Partial 
information about $P^p(E)$ and $\B^n\setminus P^p(E)$ can be encoded by setting the variables of the respective patterns to either 
$1$ or $0$, respectively. Using the definition of an 1CSP instance only, we require $y_{\mathbf{0}}=1$ and $y_{e_i}=1$ for 
all $1\le i\le n$.

To ensure the existence of the parameters $L$ and $l_i$ we have to further restrict the $y_a$. Given an upper bound $M$ on 
$L$, the inequalities of Lemma~\ref{lemma_ineq} can be formulated using so-called Big-M constraints. Fortunately all this 
\highlight{is} already known in the context of weighted simple games, see \cite{kurz2012inverse,kurz2014heuristic}. So, without any further
justification we state that the 1CSP instances with demand $n$ are in one-to-one correspondence to the feasible $0/1$ solutions $y$ of:
\begin{eqnarray*}
  y_{\mathbf{0}}=1 && \\
  y_{e_i}=1 && \forall\,1\le i\le n\\
  y_{a}-y_{b}\ge 0 && \forall\,a,b\in\B^n:\, a\ple b\\
  \sum_{i:a_i=1} l_i \le L+(1-y_a)\cdot M &&\forall a\in B^n\\
  \sum_{i:a_i=1} l_i \ge L+1-y_a\cdot M &&\forall a\in B^n\\
  l_i\le l_{i+1} && \forall\, 1\le i<n\\
 \highlighttt{l_n \le L} && \\
  y_a\in\B && \forall a\in\B^n\\
  L,l_i\in\highlightt{\mathbb{Z}_{\ge 1}}&&\forall\, 1\le i\le n,
\end{eqnarray*}
where $M$ can be chosen as $4n\left(\frac{n+1}{4}\right)^{(n+1)/2}$.

In principle we would like to maximize the target function $z_D^f(E)-z_C^p(E)$. Unfortunately both terms are the optimal 
\highlightt{values} of optimization problems itself. Since the later term arises from an LP we can model optimality by using 
the duality theorem, see \cite{alpharoughly} for an application of this technique in the context of simple games. 
Here \highlightt{it} is even simpler since we can even take any feasible solution of the LP of $z_C^p(E)$ due to the maximization.
So we replace $z_C^p(E)$ by $\sum_{a\in \B^n} x_a$ and add the constraints 
\begin{eqnarray*}
  \sum_{a\in \B^n:\, a_i=1} x_a = 1 && \forall\,1\le i\le n\\
  x_a\le y_a&&\forall \,a\in\B^n\\
  x_a\in\mathbb{R}_{\ge 0} && \forall\, a\in \B^n
\end{eqnarray*}
For $z_D^f(E)$ this approach does not work, since there is no duality theorem for ILPs and $z_D^f(E)$ has a different sign as 
$z_C^p(E)$ in the target function. So we choose a different approach. By introducing 
further inequalities we can ensure that $z_D^f(E)\ge k$ holds for all feasible solutions, \highlighttt{where $k$ is an arbitrary 
but fixed integer}. 
\highlighttt{Let $y_a^i$ be additional binary variables, which equal $1$ if pattern $a\in\B^n$ can be written as the 
sum of at most $1\le i<k$ feasible patterns $a^j$, where $j=1,\dots,i$. For $i=1$ we have $y_a^1=y_a$ for all $a\in \B^n$. Next we require} 
$y_a^i\ge y_a^{i-1}$ for all $a\in \B^n$, $2\le i<k$ and
$$
  y_a^i\ge y_{u}^{i-1}+y_v^{1}-1\quad\forall \,a,u,v\in\B^n:\, u+v=a\text{ and }\forall\, 2\le i<k.
$$
\highlighttt{As a justification let us consider a pattern $a\in\B^n$ that can be written as the sum of at most $i$ feasible patterns. 
If there exists such a representation with at most $i-1$ summands,  i.e.\ $y^{i-1}_a=1$, then $y_a^i\ge y_a^{i-1}$ implies $y_a^i=1$. Otherwise there
exists a feasible pattern $v$ and a pattern $u$, that can be written as the sum of at most $i-1$ feasible patterns, with $a=u+v$. Thus, 
$y_{u}^{i-1}=1$, $y_v^{1}=1$, and so also $y_a^i=1$. We remark that $y_a^i=1$ is also possible, if pattern $a$ can not be written as 
the sum of at most $i$ feasible patterns.}

\highlighttt{With these extra variables at hand, requiring $y_{(1,\dots,1)}^{k-1}=0$ guarantees $z_D^f(E)\ge k$. Note that
a sum $\sum_{j=1}^{i} a^j\ge a$ of feasible patterns $a^j$ implies the existence of feasible patterns $\tilde{a}^j$ with
$\sum_{j=1}^{i} \tilde{a}^j= a$.}

Many of these inequalities are redundant, which is found out quickly by a customary ILP solver. We remark that it is not 
necessary to consider \highlighttt{variables $y_a^i$ for} all $i\in \{1,\dots,k-1\}$. By using inequalities of the form 
$y_a^i\ge y_{u}^{i_1}+y_v^{i_2}-1$, 
where $i_1+i_2=i$, $i_1,i_2<i$, $\Theta(\log k)$ values for $i$ are sufficient in general. Nevertheless the ILP model becomes 
quite huge, so that we solved it with the Gurobi ILP solver on an Intel Xeon with 384 GB RAM. Of course one may try to deploy
more sophisticated ILP techniques like column generation or cut separation, which goes beyond the scope of this paper.

\section{Computational results}
\label{sec_results}

For $n\le 9$ we have used the enumeration algorithm from Section~\ref{sec_enumeration} to generate all 1CSP instances with demand $n$.
In Table~\ref{tab1} we have stated the number $\left|\mathbb{P}_n^p\right|$, the maximum value $\Delta_p(E)$, the number and the corresponding
list of instances 
\highlightt{(representatives of equivalence classes))}
attaining this maximum value, whenever computationally possible. For each mentioned instance we have used 
the smallest possible integer valued parameters $l_i$ and $L$.

\setcounter{table}{0}
\begin{table}[ht]
\begin{center}
\scriptsize
\caption{Results of computational experiments
\label{tab1}}
\begin{tabular}{|c|c|c|c|c|}
\hline
m & $|\PP^b_m|$ & $\max \Delta_p$ & $^*$ & instances from classes with maximum $\Delta_p$ \\
\hline
1 & 1		& 0			& 1 & $L=1, l=(1)$ \\
\hline
2 & 2		& 0			& 2 & $L=1, l=(1,1); L=2, l=(1,1)$ \\
\hline
3 & 5		& 1/2		& 1 & $L=2, l=(1,1,1)$ \\
\hline
4 & 17		& 2/3		& 1 & $L=3, l=(1,1,1,1)$ \\
\hline
5 & 92		& 3/4		& 2 & $L=4, l=(1,1,1,1,1); L=4, l=(1,1,2,2,3)$ \\
\hline
6 & 994		& 7/8		& 1 & $L=8, l=(1, 2, 2, 3, 4, 5)$ \\
\hline
7 & 28262	& 16/17		& 1 & $L=17, l=(2, 3, 4, 5, 6, 7, 8)$ \\
\hline
8 & 2700791 & 38/39		& 1 & $L=39, l=(2, 5, 6, 8, 11, 14, 15, 18)$ \\
\hline
9 & 990331318	& 103/104	& 2 &
      $L = 104, l = (7, 12, 16, 19, 22, 27, 30, 36, 40)$ \\
 &&&& $L = 104, l = (11, 15, 18, 20, 24, 27, 28, 32, 34)$ \\
\hline
10 & & 1			& 365 &
      $L = 81, l = (4, 6, 6, 9, 16, 29, 32, 37, 40, 62)$ \\
 &&&& $L = 89, l = (4, 6, 7, 10, 18, 32, 35, 41, 44, 68)$ \\
 &&&& $L = 101, l = (5, 7, 8, 11, 20, 36, 40, 46, 50, 78)$ \\
 &&&& $L = 142, l = (7, 10, 11, 16, 28, 51, 56, 65, 70, 108)$ \\
 &&&& and 361 other instances \\
\hline
11 & & 126/125 & 6 &
      $L=155, l=(9,12,12,16,16,46,46,54,69,77,102)$ \\
 &&&& $L=193, l=(11,15,15,20,20,57,58,67,86,96,127)$ \\
 &&&& $L=204, l=(12,16,16,21,21,60,61,71,91,101,134)$ \\
 &&&& $L=207, l=(12,16,16,21,22,61,62,72,92,103,136)$ \\
 &&&& $L=218, l=(13,17,17,22,23,64,65,76,97,108,143)$ \\
 &&&& $L=221, l=(13,17,17,23,23,65,66,77,98,110,145)$ \\
\hline
12 & & 31/30 $^{**}$ & & $L=18, l=(4,4,6,6,6,7,7,9,9,9,10,12)$ \\
\hline
13 & & 53/50 $^{**}$ & &
      $L=34, l=(8,8,10,11,11,12,12,13,13,17,17,17,18)$ \\
 &&&& $L=48, l=(11,11,14,15,16,17,17,18,19,24,24,24,25)$ \\
\hline
14 & & 17/16 $^{**}$ & &
      $L=42, l=(7,7,10,10,12,15,15,21,21,21,22,22,28,31)$ \\
 &&&& $L=50, l=(8,9,12,12,14,18,18,25,25,25,26,26,33,37)$ \\
\hline
\end{tabular}
\\[2mm]
$^*$ --- number of classes with maximum $\Delta_p$

$^{**}$ --- maximum found gap, without computational proof of optimality
\end{center}
\end{table}

The stated results for $n=10,11$ are obtained with the B\&B-algorithm of Subsection~\ref{subsec_bandb}
setting $\delta$ to $1$ or \highlight{$1+\varepsilon$}\footnote{\highlight{Each $0<\varepsilon\le \frac{1}{125}$ would have worked.}}, 
respectively. The computation time for $n=11$ was 17~hours.

For the cases $12 \le n \le 14$ we restricted the search to classes with large values of $z^p_D(E)$ due to the exponential growth of 
$\left|\mathbb{P}_n^p\right|$. For $n=12$ we checked all equivalence classes with $z^p_D(E) \ge 5$, for $m=13$ only $z^p_D(E) \ge 6$, and 
for $m=14$ only $z^p_D(E) \ge 7$.

Using the ILP formulation of Subsection~\ref{subsec_ilp_direct} (and suitable bounds from Section~\ref{sec_bounds}) we have verified 
the maximum $\Delta_p$-value for $n\le 11$, while consuming a considerably larger amount of computation time.

By slightly modifying the constraints of Lemma~\ref{lemma_partial_ineq}, according to the remarks in Section~\ref{sec_simple_games}, 
we have also computed the number of weighted simple games with up to $n=9$ voters. This uncovers read-write disk failures 
within the computation done in \cite{kurz2012minimum}, so that the number of weighted simple games for $n=9$ voters was corrected from 
$989913344$ to $993061482$.

\section{Conclusion}
\label{sec_conclusion}
We have presented an enumeration algorithm for all equivalence classes of 1CSP instances. For a demand of 
at most $9$ the corresponding numbers are determined. As a side result we could correct the number of weighted simple games for $9$ 
voters (incorrectly) stated in \cite{kurz2012minimum}. To the best of our knowledge, the relation between 1CSP instances and 
weighted simple games is indicated for the first time. By enhancing the enumeration approach to a B\&B algorithm we were able 
to computationally prove that all 1CSP instances with demand of at most \highlight{$9$} are proper IRUP instances, while we found 
\highlightt{classes  of  non-IRUP instances} with demand \highlight{$n=10$ and $\Delta_p = 1$}. This resolves an open question from~\cite{friendly_bin_packing,phd_diaz}, where 
the authors ask for proper non-IRUP instances with $n<13$. \highlight{$\Delta_p>1$ is possible for $n\ge 11$ only.} Even more we have 
exactly \highlight{determined} the maximum proper gap $\Delta_p$ for $n\le 11$ 
and classified all instances attaining the maximum gap. For further investigations on the structure of 1CSP instances 
with large gap, we have made them available at \highlight{http://www.math.tu-dresden.de/{$\sim$}capad/capad.html}. By partially going 
through the search space for $n\ge 12$, we improved the worst known proper gap from $1.003$ to $1.0625$.

With respect to the exact value $z_D^p(E)$ we have proven that all 1CSP instances with $z_D^p(E)\le 3$ are proper IRUP instances
with a proper gap smaller than $1$, while there are examples with $z_D^p(E)=4$ having $\Delta_p(E)>1$.

Focusing on the size of $L$, and so indirectly on the size of the $l_i$, we mention that the first known constructions of 
proper non-IRUP instances were rather huge. The example of \cite{marcotte1986instance} has $L=3,397,386,355$ and was 
decreased to just $L=1,111,139$ in \cite{chan1998worst}. Recently the authors of \cite{friendly_bin_packing,phd_diaz} gave
an example with $L=100$. Our smallest found example has $L=18$. It would be nice to know whether this is best possible. 

We leave the famous (proper) MIRUP conjecture still widely open and encourage more research in that direction.

With respect to the enumeration of weighted simple games, the case of $n=10$ voters might be in range of the presented 
exhaustive algorithm if further tuned. Some adaptation towards the inverse power index problem, see 
\cite{kurz2012inverse,alon_edelmann_follow_up,kurz2014heuristic}, is imaginable too. 

\section*{Acknowledgements}
The auhors would like to thank two anonymous referees for very valuable remarks on an earlier draft. 
The research was supported by the Russian Foundation of Basic Research (RFBR), project 12-07-00631-a.

\nocite{rietz2002tighter}

\bibliographystyle{plain}
\bibliography{irup_cutting_stock.bib}

\end{document}